\newtheorem*{rep@theorem}{\rep@title}
\newcommand{\newreptheorem}[2]{%
\newenvironment{rep#1}[1]{%
 \def\rep@title{#2 \ref{##1}}%
 \begin{rep@theorem}}%
 {\end{rep@theorem}}}
\newtheorem*{thm*}{Theorem}
\theoremstyle{definition}
\newtheorem*{intro_defi*}{Definition}
\newtheorem*{intro_rem*}{Remark}
\theoremstyle{theorem}
\newtheorem{lemma}{Lemma}
\newtheorem{thm}[lemma]{Theorem}
\newtheorem{prop}[lemma]{Proposition}
\newtheorem{cor}[lemma]{Corollary}
\theoremstyle{definition}
\newtheorem{rem}[lemma]{Remark}
\theoremstyle{definition}
\newcommand\norm{\bBigg@{0.8}}
 \newcommand{\indnorm}[2][flex]{\csname #1l\endcsname\|#2%
                                 \csname #1r\endcsname\|\mathclose{}}
                                  \newcommand{\indnorml}[4][flex]{\csname #1l\endcsname\|#2%
                                 \csname #1r\endcsname\|_{#3}^{#4}\mathclose{}}
\newcommand{\R} {\ensuremath {\mathbb{R}}}
\newcommand{\Z} {\ensuremath {\mathbb{Z}}}
\renewcommand{\rho}{\varrho}
\def\phi{\varphi}
\long\def\forget#1{}
\def\widetilde{\tilde}
\begin{document}

\title[Some remarks on acyclicity in bounded cohomology]{Some remarks on acyclicity in bounded cohomology}
\thanks{Title of the forthcoming published version: \emph{Addendum to ``Amenability and acyclicity in bounded cohomology"}}

\author{Marco Moraschini}
\address{\newline M. Moraschini \newline
Dipartimento di Matematica, Universit\`{a} di Bologna, Bologna, Italia}
\email{marco.moraschini2@unibo.it}

\author{George Raptis}
\address{\newline G. Raptis \newline
Department of Mathematics, Aristotle University of Thessaloniki, 541 24 Thessaloniki, Greece}
\email{raptisg@math.auth.gr}

\keywords{bounded cohomology, bounded acyclicity, injective Banach spaces}
\subjclass[2020]{18G90, 20J05, 46M1, 55N35}

\begin{abstract}
We show that a surjective homomorphism $\varphi \colon \Gamma \to K$ of (discrete) groups induces an isomorphism
$H^\bullet_b(K; V) \to H^\bullet_b(\Gamma; \varphi^{-1} V)$ in bounded cohomology for all dual normed $K$-modules $V$ if and only if the kernel of $\varphi$ is boundedly acyclic. This complements a previous result by the authors that characterized this class of group homomorphisms as bounded cohomology equivalences with respect to $\R$-generated Banach $K$-modules \cite[Theorem~4.1]{moraschiniraptis}. We deduce a characterization of the class of maps between path-connected spaces that induce isomorphisms in bounded cohomology with respect to coefficients in all dual normed modules, complementing the corresponding result shown previously in terms of $\R$-generated Banach modules \cite[Theorem C]{moraschiniraptis}.  The main new input is the proof of the fact that every boundedly acyclic group $\Gamma$ has trivial bounded cohomology with respect to all dual normed \emph{trivial} $\Gamma$-modules.
  \end{abstract}

\maketitle


This note concerns improvements and complementary results concerning the following characterization of boundedly $n$-acyclic maps shown in our previous work \cite{moraschiniraptis}: 

\begin{thm}[{see \cite[Theorem C]{moraschiniraptis}}]\label{thmC}
Let $f \colon X \to Y$ be a map between based path-connected spaces\footnote{Following the conventions of \cite{moraschiniraptis}, we restrict to topological spaces that admit a universal covering.}, let $F$ denote its homotopy fiber, and let $n \geq 0$ be an integer or $n = \infty$.  We denote by $f_* \colon \pi_1(X) \to \pi_1(Y)$ the induced homomorphism between the fundamental groups. Then the following are equivalent:
\begin{enumerate}
\item[(1)] $f$ is boundedly $n$-acyclic, that is, the induced restriction map
$$H^i_b(f; V) \colon H_b^i(Y; V) \to H_b^i(X; f_*^{-1}V)$$ 
is an isomorphism for $i \leq n$ and injective for $i=n+1$ for every $\R$-generated Banach $\pi_1(Y)$-module $V$. 
\item[(2)] The induced restriction map 
$$H^i_b(f; V) \colon H_b^i(Y; V) \to H_b^i(X; f_*^{-1}V)$$ 
is surjective for every $\R$-generated Banach $\pi_1(Y)$-module $V$ and $0 \leq i \leq n$.
\item[(3)] $F$ is path-connected and $H^i_b(X; f_*^{-1}V) = 0$ for every relatively injective $\R$-generated Banach $\pi_1(Y)$-module $V$ and $1 \leq i \leq n$.
\item[(4)]  $F$ is boundedly $n$-acyclic, that is, $H^0_b(F; \mathbb{R}) \cong \mathbb{R}$ and $H^i_b(F; \mathbb{R}) = 0$ for $1 \leq i \leq n$.
\end{enumerate}
\end{thm}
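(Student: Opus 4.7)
The plan is to reduce Theorem~\ref{thmC} to the group-theoretic characterization in \cite[Theorem~4.1]{moraschiniraptis} via Gromov's mapping theorem, which provides an isometric isomorphism $H^\bullet_b(X;V) \cong H^\bullet_b(\pi_1(X);V)$ for any path-connected based space $X$ and any $\R$-generated Banach $\pi_1(X)$-module $V$.

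First I would replace $f$ by a fibration with fiber $F$ and consider the long exact sequence of homotopy groups. Path-connectedness of $F$ is equivalent to surjectivity of $f_* \colon \pi_1(X) \to \pi_1(Y)$, and in that case $\pi_1(F)$ surjects onto $N := \ker(f_*)$ with kernel a quotient of $\pi_2(Y)$. Since this kernel is abelian, hence amenable, Gromov's mapping theorem applied twice yields
\[
H^\bullet_b(F;\R) \;\cong\; H^\bullet_b(\pi_1(F);\R) \;\cong\; H^\bullet_b(N;\R),
\]
so condition (4) translates into: $f_*$ is surjective and $N$ is boundedly $n$-acyclic.

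With this translation, the equivalence (1)$\Leftrightarrow$(4) is the space-level transcription of \cite[Theorem~4.1]{moraschiniraptis} applied to $\varphi = f_*$. The implication (1)$\Rightarrow$(2) is immediate. For (2)$\Rightarrow$(3), I would first observe that surjectivity of $H^0_b(f;V)$ for every $\R$-generated $V$ forces $f_*$ to be surjective (testing with $V = \ell^\infty(\pi_1(Y)/\im f_*)$ shows that otherwise the $\pi_1(X)$-invariants strictly contain the $\pi_1(Y)$-invariants), and then for $V$ relatively injective the vanishing $H^i_b(Y;V) = 0$ in positive degrees, combined with surjectivity, gives $H^i_b(X;f_*^{-1}V) = 0$. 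For (3)$\Rightarrow$(4), I would use that coinduction of the trivial $N$-module $\R$ up to $\pi_1(X)$, combined with restriction from suitable relatively injective $\R$-generated Banach $\pi_1(Y)$-modules, realizes $H^\bullet_b(N;\R)$ as a subquotient of $H^\bullet_b(X;f_*^{-1}V)$, so the assumed vanishing forces condition (4) via the identification above.

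I expect the main obstacle to be the implication (3)$\Rightarrow$(4): one must produce, from vanishing of $H^\bullet_b(X;f_*^{-1}V)$ for \emph{relatively injective} $\R$-generated coefficients $V$, the more primitive vanishing of $H^\bullet_b(F;\R)$. This requires a careful use of (co)induction along $N \leq \pi_1(X)$ that stays inside the class of $\R$-generated Banach modules, and is essentially the technical heart of the proof of \cite[Theorem~4.1]{moraschiniraptis}; it is precisely this delicate interaction between relative injectivity and the $\R$-generated hypothesis that the present note addresses in the complementary setting of dual normed coefficients.
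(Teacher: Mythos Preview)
Your proposal is correct and follows exactly the approach the paper indicates: Theorem~\ref{thmC} is quoted here from \cite{moraschiniraptis} rather than re-proved, but the proof of Theorem~\ref{main:thm} confirms that the argument goes by reducing to \cite[Theorem~4.1]{moraschiniraptis} via the Mapping Theorem, precisely as you outline (including the identification $H^\bullet_b(F;\R)\cong H^\bullet_b(N;\R)$ through the amenable kernel coming from $\pi_2(Y)$). One sharpening for (3)$\Rightarrow$(4): the tool is the Eckmann--Shapiro type isomorphism $H^i_b(\pi_1(X);\, f_*^{-1}\ell^\infty(\pi_1(Y))) \cong H^i_b(N;\R)$, obtained by taking the single test module $V=\ell^\infty(\pi_1(Y))$ (which is relatively injective and $\R$-generated), so one gets an isomorphism rather than merely a subquotient realization --- but since a subquotient of zero is zero, your weaker formulation still suffices.
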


For a set $S$ and a Banach space~$V$, we write $\ell^{\infty}(S, V)$ for the Banach space of bounded functions $S \to V$ (in the case of $V = \R$ we simply write $\ell^\infty(S)$). For a group $\Gamma$, the \emph{bounded complex} of $\Gamma$ with respect to trivial coefficients in~$V$ (i.e., $V$ regarded as a $\Gamma$-module equipped with the trivial action) is the cochain complex
$$0 \to V \xrightarrow{\delta^0} \ell^{\infty}(\Gamma, V) \xrightarrow{\delta^1} \ell^{\infty}(\Gamma^2, V) \to \cdots \to \ell^{\infty}(\Gamma^n, V) \xrightarrow{\delta^n} \cdots $$
with the following coboundary operator~\cite[Section~1.7]{Frigerio:book}: $\delta^0$ is the zero map, and for all $n \geq 1$ we have
\begin{equation}\label{eq:coboundary}
\begin{aligned}
\delta^n(f)(\gamma_1, \cdots, \gamma_{n+1}) &\coloneqq f(\gamma_2, \cdots, \gamma_{n+1}) \\
&+ \sum_{i = 1}^n (-1)^i f(\gamma_1, \cdots, \gamma_i \gamma_{i+1}, \cdots, \gamma_{n+1}) \\
&+ (-1)^{n+1} f(\gamma_1, \cdots, \gamma_n),
\end{aligned}
\end{equation}
where $f \in \ell^\infty(\Gamma^n, V)$ and $\gamma_1, \cdots, \gamma_n, \gamma_{n+1} \in \Gamma$.
We recall that $\Gamma$ is \emph{boundedly $n$-acyclic} if the bounded cochain complex~$(\ell^\infty(\Gamma^\bullet; \R)\; \delta^\bullet)$ has trivial cohomology in degrees $1 \leq i \leq n$. 


\smallskip

An $\R$-generated Banach $\Gamma$-module is a Banach $\Gamma$-module of the form $\ell^{\infty}(S)$ for some $\Gamma$-set $S$ (the $\Gamma$-action on $\ell^{\infty}(S)$ is given by $\gamma \cdot f (s) = f(\gamma^{-1} s)$ for all $\gamma \in \Gamma$ and $s \in S$). As we remarked in our previous work~\cite[Remark~2.41]{moraschiniraptis}, the notion of an $\R$-generated Banach $\Gamma$-module was motivated by the fact that every boundedly $n$-acyclic group $\Gamma$ satisfies $H^i_b(\Gamma; V) = 0$ for $1 \leq i \leq n$ and every $\R$-generated \emph{trivial} Banach $\Gamma$-module $V$~\cite[Proposition~2.39]{moraschiniraptis}.
It turns out that boundedly $n$-acyclic groups have trivial bounded cohomology with respect to a larger class of coefficient modules (this also answers our question in~\cite[Remark~2.41]{moraschiniraptis}):

\begin{prop} \label{prop:trivial_mod} 
Let $\Gamma$ be a boundedly $n$-acyclic group, $n \geq 1$, and let $V$ be a dual normed trivial $\Gamma$-module. Then $H^i_b(\Gamma; V) = 0$ for every $1 \leq i \leq n$.
\end{prop}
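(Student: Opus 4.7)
The plan is a dimension-shifting argument via a short exact sequence of trivial Banach $\Gamma$-modules, combined with the $\R$-generated case already established. Write $V = W^*$ for a Banach space $W$ (with trivial $\Gamma$-action). The canonical surjection $\pi_W \colon \ell^1(B_W) \to W$, $\delta_w \mapsto w$, on the unit ball $B_W$ of $W$ is bounded linear; its dual realizes the isometric $\Gamma$-equivariant embedding $V \hookrightarrow \ell^\infty(B_W) = \ell^1(B_W)^*$ as restriction of functionals to $B_W$. Setting $K := \ker \pi_W$, dualizing $0 \to K \to \ell^1(B_W) \to W \to 0$ yields the short exact sequence of trivial Banach $\Gamma$-modules
\begin{equation*}
0 \to V \to \ell^\infty(B_W) \to K^* \to 0,
\end{equation*}
whose quotient $Q := K^*$ is again a dual normed trivial $\Gamma$-module.

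Applying $\ell^\infty(\Gamma^\bullet, -)$ pointwise produces a short exact sequence of bounded cochain complexes
\begin{equation*}
0 \to \ell^\infty(\Gamma^\bullet, V) \to \ell^\infty(\Gamma^\bullet, \ell^\infty(B_W)) \to \ell^\infty(\Gamma^\bullet, Q) \to 0.
\end{equation*}
The delicate point is level-wise surjectivity of the quotient map on the right: although the embedding $V \hookrightarrow \ell^\infty(B_W)$ generally admits no bounded linear retract (so the sequence of coefficient modules is \emph{not} admissible in the Frigerio--Ivanov--Monod sense), the open mapping theorem still furnishes a bounded set-theoretic section $\sigma \colon Q \to \ell^\infty(B_W)$, and pointwise post-composition with $\sigma$ lifts any bounded function $\Gamma^n \to Q$ to a bounded function $\Gamma^n \to \ell^\infty(B_W)$ of comparable norm. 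The induced cochain maps remain linear, as they come from the linear maps on coefficients. This short exact sequence of cochain complexes therefore induces the long exact sequence in bounded cohomology
\begin{equation*}
\cdots \to H^{i-1}_b(\Gamma; Q) \to H^i_b(\Gamma; V) \to H^i_b(\Gamma; \ell^\infty(B_W)) \to H^i_b(\Gamma; Q) \to \cdots .
\end{equation*}

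The proof concludes by induction on $i \in \{1, \dots, n\}$. In the base case $i = 1$, the group $H^1_b(\Gamma; V)$ is represented by bounded homomorphisms $\Gamma \to V$ (the cocycle condition reduces to additivity for trivial coefficients, and all coboundaries are zero), and such homomorphisms vanish since $m \cdot \|f(\gamma)\| = \|f(\gamma^m)\| \leq \|f\|_\infty$ for every $m \geq 1$ forces $f = 0$. For $2 \leq i \leq n$, the inductive hypothesis applied to the dual trivial module $Q$ gives $H^{i-1}_b(\Gamma; Q) = 0$, while Proposition~2.39 of \cite{moraschiniraptis} gives $H^i_b(\Gamma; \ell^\infty(B_W)) = 0$ because $\ell^\infty(B_W)$ is $\R$-generated with trivial action; the long exact sequence then forces $H^i_b(\Gamma; V) = 0$. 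The main technical obstacle — and the feature distinguishing this argument from the classical machinery for admissible extensions — is the cochain-level surjectivity step, which replaces the missing bounded linear retract of $V \hookrightarrow \ell^\infty(B_W)$ with a non-linear open-mapping section that nevertheless suffices at the level of set-theoretic bounded functions.
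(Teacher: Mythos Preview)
Your proof is correct and takes a genuinely different route from the paper's. The paper does not do dimension-shifting on the coefficient module; instead it proves a structural lemma (Lemma~\ref{lem:complemented}) about the bounded cochain complex with $\R$-coefficients itself. Using that a quotient of an injective Banach space by an injective subspace is again injective, one shows inductively that $\textup{im}(\delta^{i-1})$ is an injective Banach space for $1 \leq i \leq n$, hence complemented in $\ell^{\infty}(\Gamma^i)$. Thus a suitable truncation of $(\ell^{\infty}(\Gamma^\bullet);\delta^\bullet)$ decomposes into split short exact sequences of Banach spaces and is contractible. The proof then uses the identification $\ell^{\infty}(\Gamma^\bullet, V) \cong \mathcal{B}(W, \ell^{\infty}(\Gamma^\bullet))$ and the fact that the functor $\mathcal{B}(W,-)$ preserves split exactness to conclude.

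Your argument is more modular: it feeds the already-established $\R$-generated case \cite[Proposition~2.39]{moraschiniraptis} into a standard long exact sequence, and the only analytic input is the open-mapping section used to obtain level-wise surjectivity. The paper's argument is more self-contained (it does not invoke the $\R$-generated result as a black box) and yields the stronger intermediate statement that the $\R$-coefficient complex splits as Banach spaces in the relevant range, which may be of independent use. A small cosmetic point: your base case $i=1$ holds for every group, so bounded $n$-acyclicity is actually only used from $i=2$ onward via the middle term $\ell^{\infty}(B_W)$; this is harmless but worth making explicit.
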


The following lemma is the key observation for the proof of Proposition~\ref{prop:trivial_mod}. We owe the inspiration for this lemma to the recent work of Glebsky, Lubotzky, Monod and Rangarajan~\cite[Section~4.2]{asymptoticbc}. 

We recall that a Banach space $X$ is \emph{injective} if for every Banach space $W$ with a subspace $V \subseteq W$ and a bounded linear map $f \colon V \to X$, there is an extension of $f$ to a bounded linear map $\widetilde{f} \colon W \to X$~\cite{janson2012some} -- equivalently, using the uniform continuity of $f$ and completeness of $X$, we may restrict to the case where $V$ is a closed subspace, that is, $V$ is again a Banach space. The Banach space $\ell^{\infty}(S)$ is injective for any set $S$ (using the Hahn-Banach theorem~\cite[Proposition~2.5.2]{albiac2006topics}). Moreover, by using an embedding of $X$ (i.e., an isomorphism onto its image) into some $\ell^{\infty}(S)$, it follows that $X$ is an injective Banach space if and only if every embedding of $X$ into a Banach space~$W$ has a complement in $W$~\cite[Section~3, pp. 92--93]{goodner1950projections} (see also~\cite[Theorem~2.7 and Corollary~2.8]{janson2012some}).

\begin{lemma} \label{lem:complemented} 
Let $\Gamma$ be a boundedly $n$-acyclic group, $n \geq 1$. For every $1 \leq i \leq n$,  $\textup{im}(\delta^{i-1})$ is a closed complemented subspace of $\ell^{\infty}(\Gamma^{i})$. As a consequence, the short exact sequence
$$0 \to \ker(\delta^i) \to \ell^{\infty}(\Gamma^i) \to \textup{im}(\delta^{i}) \to 0$$
splits (as Banach spaces) for every $1 \leq i \leq n-1$. 
\end{lemma}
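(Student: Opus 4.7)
My plan is to argue by induction on $i \in \{1, \dots, n\}$, using bounded $n$-acyclicity to rewrite $\textup{im}(\delta^{i-1}) = \ker(\delta^i)$. This already yields closedness of the image for free, since kernels of continuous linear maps are always closed; the nontrivial task is to propagate complementedness. The base case $i = 1$ is immediate: per the excerpt $\delta^0$ is the zero map, so $\textup{im}(\delta^0) = 0$ is trivially complemented in $\ell^\infty(\Gamma)$.

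For the inductive step, suppose $\ker(\delta^{i-1}) = \textup{im}(\delta^{i-2})$ is complemented in $\ell^\infty(\Gamma^{i-1})$, and pick a closed complement $C$. The restriction $\delta^{i-1}|_C \colon C \to \textup{im}(\delta^{i-1})$ is a continuous linear bijection, whose target $\textup{im}(\delta^{i-1}) = \ker(\delta^i)$ is a closed subspace of $\ell^\infty(\Gamma^i)$; by the open mapping theorem this restriction is a Banach space isomorphism. Since $C$ is a direct summand of the injective Banach space $\ell^\infty(\Gamma^{i-1})$, a standard lifting argument --- extend any bounded linear map $f \colon V \to C$ first along the inclusion $C \hookrightarrow \ell^\infty(\Gamma^{i-1})$, apply injectivity of $\ell^\infty(\Gamma^{i-1})$ to extend over a closed superspace, then project back along $\ell^\infty(\Gamma^{i-1}) \twoheadrightarrow C$ --- shows that $C$ is itself injective. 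Transporting across the isomorphism, $\textup{im}(\delta^{i-1})$ is an injective Banach space embedded as a closed subspace of $\ell^\infty(\Gamma^i)$, and hence is complemented there by the characterization of injective Banach spaces recalled in the excerpt.

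The splitting asserted in the final sentence of the lemma is then automatic: for $1 \leq i \leq n-1$ the subspace $\ker(\delta^i) = \textup{im}(\delta^{i-1})$ is complemented in $\ell^\infty(\Gamma^i)$ by the main claim, and $\textup{im}(\delta^i) = \ker(\delta^{i+1})$ is closed because $i+1 \leq n$, so a choice of complement $C'$ satisfies $\ell^\infty(\Gamma^i) = \ker(\delta^i) \oplus C'$ and yields the desired Banach-space splitting of the short exact sequence.

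I expect the main conceptual hurdle to be the interlocking of the two injectivity properties across the induction: one must pass from ``complemented inside a single injective superspace'' to ``injective itself'' via the direct-summand property, and then back out to ``complemented inside the next cochain level'' one degree up. The acyclicity hypothesis enters at the crucial moment to ensure that $\textup{im}(\delta^{i-1})$ is closed, which in turn legitimizes the open mapping step that ties the two halves of the argument together.
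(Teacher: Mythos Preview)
Your proof is correct and follows essentially the same inductive strategy as the paper's: both arguments use bounded $n$-acyclicity to identify $\textup{im}(\delta^{i-1})$ with the closed subspace $\ker(\delta^i)$, invoke the open mapping theorem to exhibit $\textup{im}(\delta^{i-1})$ as isomorphic to a complemented piece of $\ell^\infty(\Gamma^{i-1})$, and then use that complemented subspaces (equivalently, quotients by injective subspaces) of injective Banach spaces are again injective. The only cosmetic difference is that the paper phrases the inductive step via the quotient $\ell^\infty(\Gamma^{i-1})/\textup{im}(\delta^{i-2})$ rather than via an explicit complement $C$, but these are isomorphic and the content is identical.
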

\begin{proof}
Since $\Gamma$ is boundedly $n$-acyclic, we have $\textup{im}(\delta^{i-1}) = \ker(\delta^{i})$ for every $1 \leq i \leq n$. Thus, $\textup{im}(\delta^{i-1})$ is a closed subspace of $\ell^{\infty}(\Gamma^i)$. In particular, $\textup{im}(\delta^{i-1})$ is a Banach space for every $1 \leq i \leq n$ ($\textup{im}(\delta^0) = \{0\}$). Then, using the bounded $n$-acyclicity of $\Gamma$ and the open mapping theorem, the canonical bounded linear map
$$\ell^\infty(\Gamma^{i-1}) \slash \textup{im}(\delta^{i-2}) \to \textup{im}(\delta^{i-1})$$ 
is an isomorphism for $2 \leq i \leq n$. 
Note that the quotient of an inclusion $V \subseteq W$ of injective Banach spaces is again injective, since it is isomorphic to the complement of $V$ in $W$ and so the quotient map $W \to W \slash V$ admits a section. Hence we conclude inductively that  $\textup{im}(\delta^{i-1})$ is injective for $1 \leq i \leq n$, as required. It follows that $\textup{im}(\delta^{i-1}) = \ker(\delta^{i})$ is a complemented subspace of $\ell^{\infty}(\Gamma^i)$ and so the inclusion $\ker(\delta^i) \subseteq \ell^{\infty}(\Gamma^i)$ admits a splitting. 
\end{proof}

\noindent \emph{Proof of Proposition \ref{prop:trivial_mod}.} Let $V$ be a dual normed space with the trivial $\Gamma$-action. We need to show that the bounded complex of $\Gamma$ with trivial coefficients in $V$:
\begin{equation*}\label{eq:res:inv:standard}
0 \to V \xrightarrow{\delta^0_V}\ell^\infty(\Gamma, V) \xrightarrow{\delta^1_V} \ell^\infty(\Gamma^2, V) \xrightarrow{\delta^2_V} \ell^\infty(\Gamma^3, V) \xrightarrow{\delta^3_V} \cdots 
\end{equation*}
has vanishing cohomology in degrees $1 \leq i \leq n$.
By assumption, this holds for $V = \R$. For the general case, suppose that $V$ is the topological dual $\mathcal{B}(W, \R)$ of the normed space  $W$ with the trivial $\Gamma$-action. Then we have a natural identification of $\Gamma$-modules
$$
\ell^\infty(\Gamma^\bullet, V) \cong \mathcal{B}(W, \ell^\infty(\Gamma^\bullet))
$$
induced by
$$
f \mapsto \Big( w \mapsto \big((\gamma_1, \cdots, \gamma_\bullet) \mapsto f(\gamma_1, \cdots, \gamma_\bullet)(w) \big)  \Big).
$$
Moreover, these identifications yield isomorphic cochain complexes
\[
(V, \ell^\infty(\Gamma^\bullet, V); \delta^\bullet_V) \cong \Bigg(\mathcal{B}(W, \R) , \mathcal{B}\Big(W, \ell^\infty(\Gamma^\bullet)\Big); \mathcal{B}(W, \delta^\bullet)\Bigg).
\]
By Lemma \ref{lem:complemented}, the inclusion $\textup{im}(\delta^{n-1})=\ker(\delta^n) \subseteq \ell^{\infty}(\Gamma^n)$ has a complement $Q$ and we denote by $q \colon \ell^{\infty}(\Gamma^n) \to Q$ the canonical projection. Note that there is a bijective bounded linear map $Q \to \textup{im}(\delta^n)$. Further, by Lemma \ref{lem:complemented}, the ``modified truncation'' of the cochain complex $(\R, \ell^\infty(\Gamma^\bullet); \delta^{\bullet})$
$$
0 \to \R \xrightarrow{\delta^0=0} \ell^\infty(\Gamma) \to \cdots \to \ell^\infty(\Gamma^{n-1}) \xrightarrow{\delta^{n-1}} \ell^{\infty}(\Gamma^n) \xrightarrow{q} Q \to 0 \to \cdots
$$
is a contractible cochain complex of Banach spaces (it decomposes into a direct sum of split short exact sequences) and applying the functor $\mathcal{B}(W, -)$ (that preserves split short exact sequences) yields again a contractible cochain complex. The cohomology of the latter cochain complex agrees with the cohomology of the original cochain complex $(V, \ell^{\infty}(\Gamma^{\bullet}); \delta^{\bullet}_V)$ in degrees $\leq n$ and the result follows.  
\qed

\medskip

Proposition \ref{prop:trivial_mod} essentially makes it possible to replace $\R$-generated Banach $\Gamma$-modules in \cite{moraschiniraptis} with dual normed 
$\Gamma$-modules (as we had actually done originally in an old preprint version of our paper \cite{moraschiniraptisarxiv}!). At the same time, the restriction to $\R$-generated Banach $\Gamma$-modules and Theorem \ref{thmC} could still be useful as it provides a smaller test set of coefficient modules for the detection of boundedly acyclic maps. On the other hand, Remark~\ref{rem:other:coeff} will show that a similar result cannot hold if we only restrict to $\R$ as coefficients.

The proofs of the following results are the same as~\cite[Theorem~4.1 and Theorem~C]{moraschiniraptis} and together they provide a more complete characterization of boundedly $n$-acyclic maps that also includes dual normed modules.

\begin{thm}\label{thm:main:groups}
Let $\phi \colon \Gamma \to K$ be a homomorphism of discrete groups and let $H$ denote its kernel. Let $n \geq 0$ be an integer or $n=\infty$. Then the following are equivalent:
\begin{enumerate}
\item The induced restriction map
$$
H^i_b(\phi; V) \colon H_b^i(K; V) \to H_b^i(\Gamma; \phi^{-1}V)
$$
is an isomorphism for $i \leq n$ and injective for $i = n+1$ for every dual normed $K$-module $V$. 
\item The induced restriction map
$$H^i_b(\phi; V) \colon H_b^i(K; V) \to H_b^i(\Gamma; \phi^{-1}V)$$
is surjective for $0 \leq i \leq n$ and every dual normed $K$-module $V$.
\item $\phi$ is surjective and $H^i_b(\Gamma; \phi^{-1}V) = 0$ for $1 \leq i \leq n$ and every relatively injective dual normed $K$-module $V$.
\item $\phi$ is surjective and $H$ is a boundedly $n$-acyclic group.
\end{enumerate}
\end{thm}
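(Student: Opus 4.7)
The plan is to establish the cyclic chain of implications $(1) \Rightarrow (2) \Rightarrow (3) \Rightarrow (4) \Rightarrow (1)$, mirroring the proof of \cite[Theorem 4.1]{moraschiniraptis} for $\R$-generated Banach $K$-modules. The only substantive new ingredient is Proposition~\ref{prop:trivial_mod}, which enters in the final implication $(4) \Rightarrow (1)$ and is precisely what allows the enlargement of the class of test coefficients from $\R$-generated modules to all dual normed $K$-modules.

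The implication $(1) \Rightarrow (2)$ is immediate. For $(2) \Rightarrow (3)$, I would first deduce the surjectivity of $\phi$ by testing $(2)$ in degree $0$ on the dual normed $K$-module $V = \ell^\infty(K)$ (with the left-translation action, being the dual of $\ell^1(K)$): in degree $0$ the restriction identifies with the inclusion $\R \cong V^K \hookrightarrow V^{\phi(\Gamma)} \cong \ell^\infty(\phi(\Gamma) \backslash K)$, and the right-hand side strictly exceeds $\R$ unless $\phi(\Gamma) = K$. Once $\phi$ is known to be surjective, the vanishing in $(3)$ is automatic: for any relatively injective $V$ one has $H^i_b(K; V) = 0$ for $i \geq 1$, so the surjectivity of $H^i_b(\phi; V)$ supplied by $(2)$ forces $H^i_b(\Gamma; \phi^{-1}V) = 0$ in the given range.

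For $(3) \Rightarrow (4)$, I would again test against $V = \ell^\infty(K)$, which is also relatively injective as it is coinduced from the trivial subgroup of $K$. Under the identification $K \cong \Gamma/H$, the module $\phi^{-1}V$ becomes the coinduction of $\R$ from $H$ up to $\Gamma$, and Shapiro's lemma in bounded cohomology gives $H^i_b(\Gamma; \phi^{-1}V) \cong H^i_b(H; \R)$; the vanishing hypothesis in $(3)$ then translates directly to bounded $n$-acyclicity of $H$. The crux of the proof is $(4) \Rightarrow (1)$, where Proposition~\ref{prop:trivial_mod} enters. For a dual normed $K$-module $V$, I would invoke the Hochschild--Serre spectral sequence for the extension $1 \to H \to \Gamma \to K \to 1$:
$$E_2^{p,q} = H^p_b\bigl(K; H^q_b(H; \phi^{-1}V)\bigr) \Longrightarrow H^{p+q}_b(\Gamma; \phi^{-1}V).$$
Since $\phi(H) = \{1\}$, the $H$-action on $\phi^{-1}V$ is trivial, making $\phi^{-1}V$ a dual normed trivial $H$-module, and Proposition~\ref{prop:trivial_mod} therefore forces $H^q_b(H; \phi^{-1}V) = 0$ for $1 \leq q \leq n$. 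Combined with $H^0_b(H; \phi^{-1}V) = V$ equipped with its original $K$-action, a standard edge-map analysis---every nontrivial differential in the relevant range must enter or exit a row with $q \in [1, n]$, hence vanishes---yields that the restriction $H^p_b(\phi;V)$ is an isomorphism for $p \leq n$ and injective for $p = n+1$.

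The main obstacle is the vanishing of the upper rows of the $E_2$-page in the $(4) \Rightarrow (1)$ step; this is exactly what Proposition~\ref{prop:trivial_mod} provides, and with that in hand the remaining implications are formal and essentially identical to their counterparts in the $\R$-generated case of \cite[Theorem 4.1]{moraschiniraptis}.
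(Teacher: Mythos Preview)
Your proposal is correct and takes essentially the same approach as the paper: the implications $(1)\Rightarrow(2)\Rightarrow(3)\Rightarrow(4)$ are carried over verbatim from \cite[Theorem~4.1]{moraschiniraptis}, and $(4)\Rightarrow(1)$ is obtained by substituting Proposition~\ref{prop:trivial_mod} for \cite[Proposition~2.39]{moraschiniraptis} at the step where the vanishing of $H^q_b(H;\phi^{-1}V)$ for $1\le q\le n$ is required.
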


\begin{thm}\label{main:thm} 
Let $f \colon X \to Y$ be a map between based path-connected spaces,  let $F$ denote its homotopy fiber, and let $n \geq 0$ be an integer or $n=\infty$. We denote by $f_* \colon \pi_1(X) \to \pi_1(Y)$ the induced homomorphism between the fundamental groups. Then the following are equivalent:
\begin{enumerate}
\item The induced restriction map
$$
H^i_b(f; V) \colon H_b^i(Y; V) \to H_b^i(X; f_*^{-1}V)
$$
is an isomorphism for $i \leq n$ and injective for $i = n + 1$ for every dual normed $\pi_1(Y)$-module $V$. 
\item The induced restriction map 
$$H^i_b(f; V) \colon H_b^i(Y; V) \to H_b^i(X; f_*^{-1}V)$$ 
is surjective for $0 \leq i \leq n$ and every dual normed $\pi_1(Y)$-module $V$.
\item$F$ is path-connected and $H^i_b(X; f_*^{-1}V) = 0$ for $1 \leq i \leq n$ and every relatively injective dual normed $\pi_1(Y)$-module $V$.
\item $F$ is boundedly $n$-acyclic, that is, $H^0_b(F; \mathbb{R}) \cong \mathbb{R}$ and $H^i_b(F; \mathbb{R}) = 0$ for $1 \leq i \leq n$.
\end{enumerate}
\end{thm}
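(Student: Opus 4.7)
The plan is to reduce Theorem~\ref{main:thm} to the group-theoretic statement Theorem~\ref{thm:main:groups} by following the argument of~\cite[Theorem~C]{moraschiniraptis} almost verbatim; the only change is that $\R$-generated Banach modules are replaced by dual normed modules throughout, which is precisely what Theorem~\ref{thm:main:groups} (ultimately relying on Proposition~\ref{prop:trivial_mod}) now permits.

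First, I would use Gromov--Ivanov's mapping theorem to identify, for every dual normed $\pi_1(Y)$-module $V$, the map $H^i_b(f; V) \colon H^i_b(Y; V) \to H^i_b(X; f_*^{-1}V)$ with the restriction map $H^i_b(f_*; V) \colon H^i_b(\pi_1(Y); V) \to H^i_b(\pi_1(X); f_*^{-1}V)$ induced by $f_* \colon \pi_1(X) \to \pi_1(Y)$. Under this identification, conditions~(1), (2), (3) of Theorem~\ref{main:thm} translate directly into the conditions~(1), (2), (3) of Theorem~\ref{thm:main:groups} applied to $\phi = f_*$.

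Second, I would treat condition~(4) via the homotopy fiber sequence $F \to X \to Y$. From the associated long exact sequence of homotopy groups, $F$ is path-connected if and only if $f_*$ is surjective, and in that case the natural map $\pi_1(F) \twoheadrightarrow H := \ker(f_*)$ has kernel a quotient of the abelian (hence amenable) group $\pi_2(Y)$. Combining the mapping theorem applied to $F$ with the invariance of bounded cohomology under quotients by amenable normal subgroups, one obtains that $F$ is boundedly $n$-acyclic if and only if $H$ is boundedly $n$-acyclic. Thus condition~(4) of Theorem~\ref{main:thm} matches condition~(4) of Theorem~\ref{thm:main:groups} under the identification $\phi = f_*$.

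With these translations in place, the equivalence of~(1)--(4) in Theorem~\ref{main:thm} follows immediately from Theorem~\ref{thm:main:groups}. The main delicate point I would want to verify carefully is the translation of condition~(4) through the homotopy fiber: one needs both the mapping theorem for $F$ (to replace $F$ by $\pi_1(F)$ in bounded cohomology) and the vanishing of bounded cohomology along the amenable kernel of $\pi_1(F) \to H$. Both are standard ingredients already used in~\cite{moraschiniraptis}, but they carry the real substance of the spatial-to-group reduction, with everything else being a formal translation.
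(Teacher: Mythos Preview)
Your proposal is correct and follows essentially the same approach as the paper: reduce to Theorem~\ref{thm:main:groups} via the Mapping Theorem, translating conditions~(1)--(3) directly to the group homomorphism $f_*$, and handle condition~(4) through the homotopy fiber sequence and the amenable kernel of $\pi_1(F) \twoheadrightarrow \ker(f_*)$ exactly as in~\cite[Proposition~2.11 and Theorem~C]{moraschiniraptis}.
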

\begin{proof}[Proofs of Theorem~\ref{thm:main:groups} and Theorem~\ref{main:thm}]
Concerning Theorem \ref{thm:main:groups}, the implications (1)$\Rightarrow$(2)$\Rightarrow$(3)$\Rightarrow$(4) are the same as for the corresponding implications of \cite[Theorem 4.1]{moraschiniraptis}. The last implication (4)$\Rightarrow$(1) is also shown similarly by applying Proposition \ref{prop:trivial_mod} instead of \cite[Proposition~2.39]{moraschiniraptis}. Theorem \ref{main:thm} follows from Theorem \ref{thm:main:groups} using the Mapping Theorem similarly to the proof of \cite[Theorem C]{moraschiniraptis} (see also the proof of \cite[Proposition 2.11]{moraschiniraptis}). 
\end{proof}

Note that the last item in the previous theorems is independent of the choice of coefficient modules. Thus, combining these with Theorem \ref{thmC}, we conclude that bounded acyclicity of maps (as defined in \cite{moraschiniraptis}) can also be defined in terms of dual normed modules.

\begin{cor}
Let $f \colon X \to Y$ be a map between based path-connected spaces and let $n \geq 0$ be an integer or $n = \infty$. We denote by $f_* \colon \pi_1(X) \to \pi_1(Y)$ the induced homomorphism between the fundamental groups. Then the following are equivalent:
\begin{enumerate}
\item[(a)] The induced restriction map $H^i_b(f; V) \colon H_b^i(Y; V) \to H_b^i(X; f_*^{-1}V)$ is an isomorphism for $i \leq n$ and injective for $i = n + 1$ for every dual normed $\pi_1(Y)$-module $V$. 
\item[(b)] The induced restriction map $H^i_b(f; V) \colon H_b^i(Y; V) \to H_b^i(X; f_*^{-1}V)$
is an isomorphism for $i \leq n$ and injective for $i = n + 1$ for every $\R$-generated Banach $\pi_1(Y)$-module $V$.
\end{enumerate}
\end{cor}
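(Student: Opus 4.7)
The plan is to read off the corollary as a direct chain of equivalences through the two main theorems already in the paper, since both conditions (a) and (b) translate to the same fiber-intrinsic property of $f$.

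First I would observe that condition (a) is literally item (1) of Theorem~\ref{main:thm}, so by that theorem it is equivalent to item (4) of the same theorem, namely that the homotopy fiber $F$ of $f$ is boundedly $n$-acyclic. Next, condition (b) is literally item (1) of Theorem~\ref{thmC}, so by that theorem it is equivalent to item (4) there, which is again the statement that $F$ is boundedly $n$-acyclic. The common condition on $F$ makes no reference to coefficient modules at all, so (a) and (b) must be equivalent to each other.

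There is really no obstacle here; the work was done in Theorem~\ref{main:thm} (which in turn rests on Proposition~\ref{prop:trivial_mod}) and in Theorem~\ref{thmC}. The only point worth emphasizing is that both theorems are available because every $\R$-generated Banach $\pi_1(Y)$-module is in particular a dual normed $\pi_1(Y)$-module (so the two test classes are nested), and both theorems terminate in the \emph{same} coefficient-free criterion involving only $H^\bullet_b(F;\R)$. It is precisely this coincidence of the ``fiber condition'' that allows one to detect bounded $n$-acyclicity of $f$ using either the larger class of dual normed modules or the smaller class of $\R$-generated modules.
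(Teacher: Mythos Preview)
Your proposal is correct and matches the paper's own argument essentially verbatim: the paper simply observes that item~(4) in Theorem~\ref{main:thm} and item~(4) in Theorem~\ref{thmC} are the same coefficient-free fiber condition, and deduces the corollary from that. Your additional remark that $\R$-generated Banach modules are in particular dual normed modules is true and harmless, though not actually needed for the argument to go through.
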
 

\begin{rem}\label{rem:other:coeff}
We emphasize that the previous result is specific to the respective classes of coefficient modules. Indeed, if we consider just $\R$ as coefficients or even (semi-)separable coefficients in the sense of Monod~\cite[Section~3.C]{monod10}, then the resulting notion of bounded acyclicity is strictly weaker.

More precisely, Monod proved that the restricted wreath product $F_2 \wr \Z \coloneqq (\bigoplus_{\Z} F_2) \rtimes \Z$ is boundedly acyclic with respect to all dual (semi-)separable coefficients~\cite{monod:thompson}. Also, $\Z$ has trivial bounded cohomology for all dual normed coefficient modules. But the group $\bigoplus_{\Z} F_2$ is not boundedly acyclic (this already fails in degree $2$), so the canonical homomorphism $F_2 \wr \Z \to \Z$ does not satisfy the conditions of Theorem \ref{thm:main:groups}. In fact, one can show (by using cohomological induction) that if we consider the $\R$-generated Banach $(F_2 \wr \Z)$-module $\ell^\infty(\Z)$, then $H_b^2(F_2 \wr \Z; \ell^\infty(\Z)) \neq 0$~\cite[Section~4.3]{monod:thompson}.

In this specific example we also see the different nature of the two classes of coefficient modules. Indeed, given a group~$\Gamma$ the basic example of a semi-separable coefficient module is the space of essentially bounded functions $L^\infty(\Omega)$, where $\Omega$ is a standard Borel probability space with a measurable measure-preserving $\Gamma$-action~\cite[Section~3.C]{monod10}. However, $\R$-generated modules are defined through bounded functions over \emph{arbitrary discrete $\Gamma$-sets}.
\end{rem}

The fact that surjective group homomorphisms induce an injective map in degree $2$ in bounded cohomology~\cite[Example~4.2]{moraschiniraptis} extends to the class of dual normed coefficient modules. 

We also have the following version of \cite[Corollary~4.3]{moraschiniraptis}:

\begin{cor}\label{cor:intro:normal:subgrps:vbcr}
Let $n \geq 1$ be an integer or $n = \infty$. Let $\Gamma$ be a boundedly $n$-acyclic group, let $H \unlhd \Gamma$ be a normal subgroup, and let $\phi \colon \Gamma \to \Gamma \slash H$ be the quotient homomorphism. Then $H$ is boundedly $n$-acyclic if and only if 
$$H^{i}_b(\Gamma; \phi^{-1}V) = 0$$ for every relatively injective dual normed $\Gamma \slash H$-module $V$ and $1 \leq i \leq n$.
\end{cor}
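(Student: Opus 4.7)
The plan is to obtain this corollary as a direct specialization of Theorem \ref{thm:main:groups} to the quotient homomorphism $\phi \colon \Gamma \to \Gamma \slash H$. Setting $K = \Gamma \slash H$ and noting that $H = \ker \phi$, I would first observe that $\phi$ is by construction surjective, so the ``$\phi$ is surjective'' clauses appearing in conditions (3) and (4) of Theorem \ref{thm:main:groups} are automatic. What remains of condition (3) is precisely the cohomological vanishing hypothesis stated in the corollary, and what remains of condition (4) is precisely that $H$ is boundedly $n$-acyclic.

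From here, the equivalence (3) $\Leftrightarrow$ (4) in Theorem \ref{thm:main:groups} immediately produces the desired biconditional. The non-trivial implication (4) $\Rightarrow$ (3) -- that bounded $n$-acyclicity of the kernel forces vanishing of $H^{i}_b(\Gamma; \phi^{-1}V)$ on every relatively injective dual normed $\Gamma \slash H$-module $V$ -- is exactly the step where Proposition \ref{prop:trivial_mod} enters the proof of Theorem \ref{thm:main:groups}, replacing the corresponding appeal to \cite[Proposition~2.39]{moraschiniraptis} in the $\R$-generated setting; so all the substantive content of the corollary is already packaged in Theorem \ref{thm:main:groups}.

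There is essentially no obstacle beyond invoking Theorem \ref{thm:main:groups}. I note, however, that the standing hypothesis that $\Gamma$ itself be boundedly $n$-acyclic is not strictly needed to run the above argument; it is kept to parallel \cite[Corollary~4.3]{moraschiniraptis} and because it delimits the intended regime of the statement, namely one in which the bounded acyclicity of $\Gamma$ is already known and one wishes to detect the bounded acyclicity of the normal subgroup $H$ through a cohomological test against coefficients pulled back from the quotient $\Gamma \slash H$.
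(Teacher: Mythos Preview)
Your proposal is correct and matches the paper's intended argument: the corollary is stated immediately after Theorem~\ref{thm:main:groups} as the dual-normed analogue of \cite[Corollary~4.3]{moraschiniraptis}, and the (implicit) proof is precisely the specialization of the equivalence (3)$\Leftrightarrow$(4) to the quotient homomorphism that you describe. Your remark that the bounded $n$-acyclicity of $\Gamma$ is not actually used in this deduction is also correct.
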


\bigskip

\noindent \textbf{Acknowledgements.} We are grateful to Francesco Fournier-Facio for bringing~\cite{asymptoticbc} to our attention. 
We also thank the referees for their careful reading and useful suggestions.

MM was partially supported by the ERC ``Definable Algebraic Topology" DAT - Grant Agreement n. 101077154. GR was partially supported by SFB 1085 - \emph{Higher Invariants} (University of Regensburg) funded by the DFG.

This work has been funded by the European Union - NextGenerationEU under the National Recovery and Resilience Plan (PNRR) - Mission 4 Education and research - Component 2 From research to business - Investment 1.1 Notice Prin 2022 -  DD N. 104 del 2/2/2022, from title ``Geometry and topology of manifolds", proposal code 2022NMPLT8 - CUP J53D23003820001.
 
\bibliographystyle{amsalpha}
\bibliography{svbib}

\end{document}